\documentclass[11pt,reqno]{amsart}

\usepackage[utf8]{inputenc}
\usepackage[T1]{fontenc}
\usepackage{lmodern}
\usepackage{microtype} 

\usepackage{amsmath,amssymb,amsfonts,amsthm}
\usepackage{mathtools}
\usepackage{mathrsfs}

\usepackage{graphicx}
\usepackage{booktabs} 
\usepackage{array}

\usepackage[margin=1in]{geometry}

\theoremstyle{plain}
\newtheorem{theorem}{Theorem}[section]

\theoremstyle{definition}
\newtheorem{definition}[theorem]{Definition}
\newtheorem{example}[theorem]{Example}

\numberwithin{equation}{section}

\usepackage{xcolor}
\definecolor{winered}{rgb}{0.5,0,0}
\definecolor{darkblue}{rgb}{0,0,0.6}

\usepackage[
    pdftex,
    colorlinks=true,
    linkcolor=darkblue,
    citecolor=winered,
    urlcolor=darkblue,
    hypertexnames=false 
]{hyperref}

\title[Discrete Prüfer Approach to Eigenvalue Estimates]{A Discrete Prüfer Transformation Approach to Sturm--Liouville Difference Equations and Eigenvalue Estimation}

\author{F. Ay\c{c}a \c{C}etinkaya}
\address{Department of Mathematics, University of Tennessee at Chattanooga, 37403 Chattanooga, TN, USA.}
\email{fatmaayca-cetinkaya@utc.edu}

\author{Kürşat Er}
\address{Mersin University, Department of Mathematics, 33343 Mersin, T\"{u}rkiye.}
\email{erkursat89@gmail.com}
\author{Hamza Menken}
\address{Mersin University, Department of Mathematics, 33343 Mersin, T\"{u}rkiye.}
\email{hmenken@mersin.edu.tr}
\date{\today}

\begin{document}

\begin{abstract}
In this paper, we study regular second-order Sturm--Liouville difference equations using the discrete Prüfer transformation. By representing solutions in amplitude and phase coordinates, we analyze an exact algebraic phase system that guarantees unique, monotonic phase tracking and preserves classical oscillation properties. Using this theoretical foundation, we develop a Prüfer-based numerical shooting method to compute eigenvalues for discrete boundary value problems. To initialize the root-finding algorithm, we apply Gershgorin's theorem to the difference operator to establish mathematically guaranteed starting search intervals. Numerical experiments on classical benchmark problems demonstrate that the proposed method effectively isolates the discrete spectrum and converges to the exact continuous eigenvalues with second-order $\mathcal{O}(h^2)$ accuracy.
\end{abstract}

\maketitle

\section{Introduction}
One important class of differential equations is the Sturm--Liouville equation, which arises naturally in mathematical physics, for example through the method of separation of variables applied to the heat or wave equations. These problems play a central role in understanding eigenvalue distributions, vibration modes, and heat flow. In recent years, interest has grown in studying discrete analogs of Sturm–Liouville problems \cite{agarwal2000difference, bohner1996disconjugacy, bohner1997asymptotic, bohner2017bessel, elaydiintroduction, kelley2001difference}. Formulating these problems in a discrete setting not only broadens their applicability but also enables the development of efficient computational methods for solving associated eigenvalue problems.

The Pr\"ufer substitution is a classical technique used in the analysis of second-order linear differential equations, particularly Sturm--Liouville problems:
\[
(P(x) y'(x))' + Q(x) y(x) = 0, \quad x \in [a, b],
\]
It involves expressing the solution in polar coordinates, transforming the function and its derivative into an amplitude $r(x)$ and a phase $\theta(x)$ via the substitution
\[
y(x) = r(x) \sin \theta(x),
\]
\[
P(x) y'(x) = r(x) \cos \theta(x).
\]
This change of variables reduces the original second-order equation to a coupled first-order system
\[
\frac{d\theta}{dx} = \frac{\cos^2 \theta}{P(x)} + Q(x) \sin^2 \theta,
\]
\[
\frac{dr}{dx} = r \sin \theta \cos \theta \left( \frac{1}{P(x)} - Q(x) \right),
\]
where the phase equation governs the evolution of the argument $\theta(x)$, and the amplitude equation describes the growth or decay of the solution. One key advantage of the Pr\"ufer transformation is that the zeros of the solution correspond to values where $\theta(x)=n \pi$, making it a powerful tool for oscillation theory, eigenvalue estimation, and nodal analysis. It has been extensively used to study the qualitative behavior of solutions, count eigenvalues, and establish comparison theorems in continuous Sturm--Liouville theory \cite{zbMATH03450627, zbMATH03190499, zbMATH03109695, zbMATH06054089, zbMATH02219859}.

In this paper, we study a second-order Sturm--Liouville difference equation of the form
\begin{equation} \label{dif_eq} 
\Delta \big(r_k \Delta x_k\big) + p_k x_{k+1} = 0,
\end{equation}
where $r_k$ and $p_k$ are real-valued sequences defined for all integers $k$, with $r_k \neq 0$ for every $k$. The operator $\Delta$ denotes the forward difference operator, defined by 
\begin{equation} \label{for_dif}
\Delta x_k=x_{k+1}-x_{k}.
\end{equation}

Following \cite{zbMATH01602173}, we introduce a change of variables that represents the solution in terms of its amplitude $\varrho_k$ and phase $\varphi_k$: 
\begin{align} \label{PT_x}
x_k &= \varrho_k\sin \varphi_k,\\ \label{PT_rx}
r_k \Delta x_k &= \varrho_k \cos \varphi_k.
\end{align}
This substitution, known as the \emph{discrete Prüfer transformation}, will be used to analyze the qualitative behavior of the solutions.

\section{The Discrete Prüfer Transformation}
The following theorem provides a discrete analogue of the classical Prüfer differential equations. It establishes the relations satisfied by the amplitude and phase variables defined through the discrete Prüfer transformation.
\begin{theorem}\cite[Theorem 2.1]{zbMATH01602173}
If $x$ is a nontrivial solution of \eqref{dif_eq} and of $\varrho$ and $\varphi$ are defined by \eqref{PT_x} and \eqref{PT_rx}, then the equations
\begin{align} \label{rho_de}
\Delta \varrho_k=&\varrho_k \Bigg[\dfrac{1}{r_k}\cos \varphi_k \sin \varphi_{k+1}-p_k \sin \varphi_k \cos \varphi_{k+1}-\dfrac{p_k}{r_k}\cos \varphi_k \cos \varphi_{k+1}-\dfrac{\big(\Delta \sin \varphi_k\big)^2 +\big(\Delta \cos \varphi_k\big)^2}{2}\Bigg], \\ 
\sin \Delta \varphi_k=&\dfrac{1}{r_k} \cos \varphi_k \cos \varphi_{k+1}+p_k \sin \varphi_k \sin\varphi_{k+1}+\dfrac{p_k}{r_k} \cos \varphi_k \sin \varphi_{k+1} \label{phi_de}
\end{align}
hold true. 
\end{theorem}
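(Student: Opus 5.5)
The plan is to turn the second-order equation \eqref{dif_eq} into a first-order recursion for the pair $(x_k, r_k\Delta x_k)$. We then substitute \eqref{PT_x}--\eqref{PT_rx} at indices $k$ and $k+1$, and take two linear combinations, with coefficients $\sin\varphi_{k+1}$ and $\cos\varphi_{k+1}$, to isolate $\varrho_{k+1}$ and $\sin\Delta\varphi_k$.

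\textbf{Step 1: the one-step recursion.} From \eqref{for_dif} and \eqref{PT_rx} we have $x_{k+1}=x_k+\Delta x_k=\varrho_k\sin\varphi_k+\frac{\varrho_k}{r_k}\cos\varphi_k$. Expanding \eqref{dif_eq} gives $r_{k+1}\Delta x_{k+1}=r_k\Delta x_k-p_kx_{k+1}$. Writing the left-hand sides in Prüfer form at index $k+1$ yields
\begin{align*}
\varrho_{k+1}\sin\varphi_{k+1}&=\varrho_k\Big(\sin\varphi_k+\tfrac{1}{r_k}\cos\varphi_k\Big),\\
\varrho_{k+1}\cos\varphi_{k+1}&=\varrho_k\Big(\cos\varphi_k-p_k\sin\varphi_k-\tfrac{p_k}{r_k}\cos\varphi_k\Big).
\end{align*}
We also record that $\varrho_k\neq0$ for every $k$. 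If $\varrho_k=0$, then $x_k=0$ and $\Delta x_k=0$ (because $r_k\neq0$). Since $r_k\neq0$, the recursion can be run in both directions, so the solution would vanish identically, contradicting nontriviality.

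\textbf{Step 2: the phase equation.} Multiply the first identity by $\cos\varphi_{k+1}$ and the second by $\sin\varphi_{k+1}$, then subtract. The left side becomes $0$. Dividing the right side by $\varrho_k\neq0$ gives
\[
0=\sin\varphi_k\cos\varphi_{k+1}-\cos\varphi_k\sin\varphi_{k+1}+\tfrac{1}{r_k}\cos\varphi_k\cos\varphi_{k+1}+p_k\sin\varphi_k\sin\varphi_{k+1}+\tfrac{p_k}{r_k}\cos\varphi_k\sin\varphi_{k+1}.
\]
Since $\cos\varphi_k\sin\varphi_{k+1}-\sin\varphi_k\cos\varphi_{k+1}=\sin(\varphi_{k+1}-\varphi_k)=\sin\Delta\varphi_k$, this is exactly \eqref{phi_de}.

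\textbf{Step 3: the amplitude equation.} Multiply the first identity by $\sin\varphi_{k+1}$ and the second by $\cos\varphi_{k+1}$, then add. This gives
\[
\varrho_{k+1}=\varrho_k\Big[\sin\varphi_k\sin\varphi_{k+1}+\cos\varphi_k\cos\varphi_{k+1}+\tfrac{1}{r_k}\cos\varphi_k\sin\varphi_{k+1}-p_k\sin\varphi_k\cos\varphi_{k+1}-\tfrac{p_k}{r_k}\cos\varphi_k\cos\varphi_{k+1}\Big].
\]
Subtract $\varrho_k$ from both sides. The only remaining point is the identity
\[
\sin\varphi_k\sin\varphi_{k+1}+\cos\varphi_k\cos\varphi_{k+1}-1=-\tfrac12\big[(\Delta\sin\varphi_k)^2+(\Delta\cos\varphi_k)^2\big],
\]
which follows by expanding the squares and using $\sin^2+\cos^2=1$ at both indices. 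Substituting it gives \eqref{rho_de}.

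The main obstacle is recognizing the right combinations rather than any hard estimate. The stated formulas mix indices $k$ and $k+1$, so one should not simply solve for $\tan\varphi_{k+1}$. Instead, pair the two recursion identities with $\cos\varphi_{k+1},\sin\varphi_{k+1}$, which eliminates $\varrho_{k+1}$ for the phase equation and isolates it for the amplitude equation. The nonvanishing of $\varrho_k$ from Step 1 is what justifies the division in Step 2.
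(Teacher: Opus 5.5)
Your proof is correct. Steps 1 and 3 coincide with the paper's argument, which appears in the converse half of the proof of the subsequent theorem. Both use the same one-step recursion for $(x_{k+1}, r_{k+1}\Delta x_{k+1})$, the same combination with $\sin\varphi_{k+1}$ and $\cos\varphi_{k+1}$ to isolate $\varrho_{k+1}$, and the same identity $1-\cos\Delta\varphi_k=\tfrac12\big[(\Delta\sin\varphi_k)^2+(\Delta\cos\varphi_k)^2\big]$.

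You genuinely differ on the phase equation. The paper goes through the following chain:
\begin{itemize}
\item It expands $\sin\Delta\varphi_k$ back in the original variables.
\item It uses \eqref{dif_eq} to obtain $\varrho_k\varrho_{k+1}\sin\Delta\varphi_k=r_k(\Delta x_k)^2+p_kx_kx_{k+1}$.
\item It rewrites this as $\sin\Delta\varphi_k=\frac{\varrho_k}{r_k\varrho_{k+1}}\cos^2\varphi_k+p_k\sin\varphi_k\sin\varphi_{k+1}$.
\item It removes the ratio $\varrho_k/\varrho_{k+1}$ using the backward relation $\frac{\varrho_k}{\varrho_{k+1}}\cos\varphi_k=\cos\varphi_{k+1}+p_k\sin\varphi_{k+1}$, which requires dividing by $\varrho_{k+1}$ as well.
\end{itemize}

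Your orthogonal combination eliminates $\varrho_{k+1}$ in a single step and divides only by $\varrho_k$. It also makes the structure transparent: \eqref{rho_de} and \eqref{phi_de} are the components of one vector recursion along and orthogonal to $(\sin\varphi_{k+1},\cos\varphi_{k+1})$.

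The paper's longer route does buy something. Its intermediate identity $\varrho_k\varrho_{k+1}\sin\Delta\varphi_k=r_k(\Delta x_k)^2+p_kx_kx_{k+1}$ ties the sign of the phase increment directly to the solution values. That form is convenient for oscillation and generalized-zero arguments.
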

The system consisting of equations \eqref{rho_de} and \eqref{phi_de} is called the \textit{Prüfer system} associated with the difference equation \eqref{dif_eq}. This system provides a complete characterization of the solutions to the original equation, as formalized in the following theorem.
\begin{theorem}
Suppose that \eqref{dif_eq}, \eqref{rho_de}, and \eqref{phi_de} are valid. If $\varrho_k$ and $\varphi_k$ are solutions to \eqref{rho_de} and \eqref{phi_de}, respectively, then the function $x_k$ defined by \eqref{PT_x} solves \eqref{dif_eq}. Conversely, if $x_k$ is a nontrivial solution to \eqref{dif_eq}, then there exist sequences $\varrho_k$ and $\varphi_k$ satisfying \eqref{rho_de} and \eqref{phi_de} such that \eqref{PT_x}, \eqref{PT_rx} hold with $\varrho_k \neq 0.$
\end{theorem}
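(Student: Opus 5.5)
The plan is to recast \eqref{dif_eq} as an equivalent first-order system and show that the Prüfer system \eqref{rho_de}--\eqref{phi_de} is exactly that system written in rotated coordinates. Put $u_k = r_k\Delta x_k$. Since $r_k\neq 0$, \eqref{dif_eq} is equivalent to
\[
x_{k+1} = x_k + \frac{u_k}{r_k}, \qquad u_{k+1} = u_k - p_k x_{k+1}.
\]
Given $\varrho_k,\varphi_k$, set
\[
A_k = \varrho_k\sin\varphi_k + \frac{\varrho_k\cos\varphi_k}{r_k}, \qquad B_k = \varrho_k\cos\varphi_k - p_k A_k .
\]
For $(x_k,u_k)=(\varrho_k\sin\varphi_k,\varrho_k\cos\varphi_k)$, the first-order system reads $\varrho_{k+1}\sin\varphi_{k+1}=A_k$ and $\varrho_{k+1}\cos\varphi_{k+1}=B_k$ for every $k$. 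The matrix with rows $(\sin\varphi_{k+1},\cos\varphi_{k+1})$ and $(\cos\varphi_{k+1},-\sin\varphi_{k+1})$ is invertible. Hence this pair of equations is equivalent to
\[
\varrho_{k+1} = A_k\sin\varphi_{k+1} + B_k\cos\varphi_{k+1}, \qquad 0 = A_k\cos\varphi_{k+1} - B_k\sin\varphi_{k+1}.
\]

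The central computation expands these two scalar identities.

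\emph{Phase equation.} The second identity becomes
\[
\varrho_k\Big[\sin\varphi_k\cos\varphi_{k+1}-\cos\varphi_k\sin\varphi_{k+1} + \tfrac{1}{r_k}\cos\varphi_k\cos\varphi_{k+1} + p_k\sin\varphi_k\sin\varphi_{k+1} + \tfrac{p_k}{r_k}\cos\varphi_k\sin\varphi_{k+1}\Big]=0 .
\]
The first two terms equal $-\sin\Delta\varphi_k$. So, as long as $\varrho_k\neq0$, this identity is exactly \eqref{phi_de}.

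\emph{Amplitude equation.} The first identity becomes
\[
\varrho_{k+1}=\varrho_k\Big[\cos\Delta\varphi_k + \tfrac{1}{r_k}\cos\varphi_k\sin\varphi_{k+1} - p_k\sin\varphi_k\cos\varphi_{k+1} - \tfrac{p_k}{r_k}\cos\varphi_k\cos\varphi_{k+1}\Big].
\]
Using $1-\cos\Delta\varphi_k = \tfrac12\big[(\Delta\sin\varphi_k)^2+(\Delta\cos\varphi_k)^2\big]$, which follows by expanding the squares, this is exactly \eqref{rho_de}. Thus, when $\varrho_k\neq 0$, the Prüfer system is equivalent to the first-order system for $(\varrho_k\sin\varphi_k,\varrho_k\cos\varphi_k)$.

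\emph{First direction.} Let $\varrho,\varphi$ solve \eqref{rho_de}--\eqref{phi_de} with $\varrho_k\neq 0$; this assumption is implicit and is needed to divide out $\varrho_k$ in the phase equation. Define $x_k=\varrho_k\sin\varphi_k$. By the equivalence above, $\varrho_{k+1}\sin\varphi_{k+1}=A_k$. This gives $r_k\Delta x_k=\varrho_k\cos\varphi_k$, so \eqref{PT_rx} holds as well. Then $\varrho_{k+1}\cos\varphi_{k+1}=B_k$ gives $\Delta(r_k\Delta x_k) = -p_kx_{k+1}$, which is \eqref{dif_eq}.

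\emph{Converse.} Let $x$ be a nontrivial solution and define
\[
\varrho_k=\sqrt{x_k^2+(r_k\Delta x_k)^2}
\]
with $\varphi_k$ the corresponding polar angle, so that \eqref{PT_x} and \eqref{PT_rx} hold. Then the preceding theorem (\cite[Theorem 2.1]{zbMATH01602173}), or the same computation read backwards, yields \eqref{rho_de}--\eqref{phi_de}.

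The main obstacle is showing $\varrho_k\neq 0$ for all $k$. Suppose $x_k=0$ and $r_k\Delta x_k=0$ at some index. Then $x_{k+1}=0$ as well. The recurrence, run forward, forces $x\equiv 0$ for all larger indices. Run backward, it also forces $x\equiv 0$, because $(x_k,u_k)$ is recovered from $(x_{k+1},u_{k+1})$ via $u_k=u_{k+1}+p_kx_{k+1}$ and $x_k=x_{k+1}-u_k/r_k$, which uses $r_k\neq 0$. This contradicts nontriviality, so $\varrho_k\neq0$ throughout.
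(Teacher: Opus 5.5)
Your proof is correct, and it is organized differently from the paper's.

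The paper treats the two directions by separate computations. For the forward direction it reduces \eqref{dif_eq} to $\varrho_{k+1}(\cos\varphi_{k+1}+p_k\sin\varphi_{k+1})=\varrho_k\cos\varphi_k$. It checks this by multiplying \eqref{rho} by $\cos\varphi_{k+1}+p_k\sin\varphi_{k+1}$ and substituting \eqref{phi_de} twice. For the converse it gets \eqref{rho_de} from the same projection as your first row. It derives \eqref{phi_de} differently, through the Wronskian-type identity $\varrho_k\varrho_{k+1}\sin\Delta\varphi_k=r_k(\Delta x_k)^2+p_kx_kx_{k+1}$ combined with $\frac{\varrho_k}{\varrho_{k+1}}\cos\varphi_k=\cos\varphi_{k+1}+p_k\sin\varphi_{k+1}$.

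You instead prove a single equivalence and read it both ways. The Prüfer system is the first-order system $(x_{k+1},u_{k+1})=(A_k,B_k)$ viewed through the involution with rows $(\sin\varphi_{k+1},\cos\varphi_{k+1})$ and $(\cos\varphi_{k+1},-\sin\varphi_{k+1})$.

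Besides being shorter and symmetric, this fixes a weak point of the paper's forward direction. The paper invokes \eqref{PT_rx} there, but that relation does not follow from defining $x_k$ by \eqref{PT_x} and is never derived. The identity the paper verifies is only the second equation $u_{k+1}=u_k-p_kx_{k+1}$ (with $u_k=\varrho_k\cos\varphi_k$). Your inversion also delivers $\varrho_{k+1}\sin\varphi_{k+1}=A_k$, which is precisely \eqref{PT_rx}. The paper's route, for its part, gives an explicit formula for $\sin\Delta\varphi_k$ in terms of the solution, which is the natural object for sign and monotonicity arguments on the phase.

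One small correction: you do not need $\varrho_k\neq0$ in the forward direction. There you multiply \eqref{phi_de} by $\varrho_k$ to obtain $A_k\cos\varphi_{k+1}-B_k\sin\varphi_{k+1}=0$; you never divide by it. The division, and hence your nonvanishing argument (which is the same as the paper's), is needed only for the converse.
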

\begin{proof}
Assume $\varrho_k$ and $\varphi_k$ satisfy \eqref{rho_de} and \eqref{phi_de}, respectively and $x_k$ is defined by \eqref{PT_x}. We want to show \eqref{dif_eq} is satisfied. From \eqref{PT_x} and \eqref{PT_rx}, the left side of \eqref{dif_eq} becomes
\[
\big(r_{k+1}\Delta x_{k+1}-r_k \Delta x_k\big)+p_k x_{k+1}=\big(\varrho_{k+1}\cos\varphi_{k+1}-\varrho_k \cos\varphi_k\big)+p_k \varrho_{k+1}\sin \varphi_{k+1}.
\]
We must prove
\[
\varrho_{k+1} \big(\cos \varphi_{k+1}+p_k \sin \varphi_{k+1}\big)=\varrho_k \cos \varphi_k.
\]
Using the identity
\[
\dfrac{\big(\Delta \sin \varphi_k\big)^2+\big(\Delta \cos \varphi_k\big)^2}{2}=1-\cos \Delta \varphi_k
\]
to simplify \eqref{rho_de} we have
\begin{equation} \label{rho}
\varrho_{k+1}=\varrho_k \bigg[\cos \Delta \varphi_k +\dfrac{1}{r_k}\cos \varphi_k \sin \varphi_{k+1}-p_k \sin \varphi_k \cos \varphi_{k+1}-\dfrac{p_k}{r_k}\cos \varphi_k \cos \varphi_{k+1}\bigg].
\end{equation}
Multiplying \eqref{rho} by $\cos \varphi_{k+1}+p_k \sin \varphi_{k+1}$ gives 
\begin{align} \nonumber
\varrho_{k+1} \big(\cos \varphi_{k+1} + p_k \sin \varphi_{k+1}\big) &= \varrho_k \bigg[\cos \varphi_{k+1} \cos \Delta \varphi_k +p_k \sin \varphi_{k+1} \cos \Delta \varphi_k \\ \nonumber
&+ \dfrac{1}{r_k} \cos \varphi_k \cos \varphi_{k+1} \sin \varphi_{k+1} +\dfrac{p_k}{r_k} \cos \varphi_k  \sin^2 \varphi_{k+1} \\ \nonumber
&-\dfrac{p_k}{r_k}\cos \varphi_k \cos^2 \varphi_{k+1}-\dfrac{p_k^2}{r_k}\cos \varphi_k \cos \varphi_{k+1} \sin \varphi_{k+1}\\
&-p_k \sin \varphi_k \cos^2 \varphi_{k+1}-p_k^2 \sin \varphi_k \cos \varphi_{k+1} \sin \varphi_{k+1}\bigg]. \label{rho_multiplied}
\end{align}
We will show the bracket equals $\cos \varphi_k$. Multiplying \eqref{phi_de} by $\sin \varphi_{k+1}$ gives
\begin{equation} \label{phi_de_multiplied}
\sin \varphi_{k+1} \sin \Delta \varphi_k-p_k \sin \varphi_k \sin^2 \varphi_{k+1}= \dfrac{1}{r_k} \cos \varphi_{k} \cos \varphi_{k+1} \sin \varphi_{k+1}+\dfrac{p_k}{r_k} \cos \varphi_k \sin^2 \varphi_{k+1}. 
\end{equation}
Substituting the left hand side of \eqref{phi_de_multiplied} into \eqref{rho_multiplied} and making necessary calculations we arrive 
\begin{align} \label{calculations}
\varrho_{k+1} \big(\cos \varphi_{k+1} + p_k \sin \varphi_{k+1}\big) =&\varrho_k\bigg[ \cos \varphi_k+ p_k \cos \varphi_{k+1} \sin \Delta \varphi_k\\ \nonumber
&-\dfrac{p_k}{r_k} \cos \varphi_k \cos \varphi_{k+1} \big(\cos \varphi_{k+1}+p_k \sin \varphi_{k+1}\big)-p_k^2 \sin \varphi_k \cos \varphi_{k+1} \sin \varphi_{k+1}\bigg].
\end{align}
Now multiply \eqref{phi_de} by $p_k \cos \varphi_{k+1} $
\begin{equation} \label{almost_there}
p_k \cos \varphi_{k+1} \sin \Delta \varphi_k=\dfrac{p_k}{r_k} \cos \varphi_k \cos \varphi_{k+1} \big(\cos \varphi_{k+1}+p_k \sin \varphi_{k+1}\big)+p_k^2 \sin \varphi_k \cos \varphi_{k+1} \sin \varphi_{k+1}
\end{equation}
Substituting \eqref{almost_there} into \eqref{calculations} gives us the step needed to conclude 
\[
\varrho_{k+1} \big(\cos \varphi_{k+1}+p_k \sin \varphi_{k+1}\big)=\varrho_k \cos \varphi_k
\]
and hence \eqref{dif_eq} is satisfied:
\[
\Delta \big(r_k \Delta x_k\big)+p_k x_{k+1}=0.
\]
Now, we prove the other side. We assume $x_k$ is a nontrivial solution of \eqref{dif_eq}. We will construct $\varrho_k >0$ and $\varphi_k$ so that \eqref{PT_x} and \eqref{PT_rx} hold and then derive the Prüfer system \eqref{rho_de}, \eqref{phi_de}. \newline
\indent For each $k$, set
\[
\varrho_k := \sqrt{x_k^2+\big(r_k \Delta x_k\big)^2} \geq 0.
\]
If $\varrho_k >0$ we define $\varphi_k$ by 
\[
\sin \varphi_k:=\dfrac{x_k}{\varrho_k}, \quad
\cos \varphi_k := \dfrac{r_k \Delta x_k}{\varrho_k}.
\]
For any nontrivial solution $x_k$, we have $\varrho_k >0$ for all $k$. Indeed, suppose $\varrho_{k_0}=0$ for some $k_0$. Then $x_{k_0}=0$ and $r_{k_0}\Delta x_{k_0}=0$. Since $r_{k_0} \neq 0$, this gives $\Delta x_{k_0}=0$, hence $x_{{k_0}+1}=x_{k_0}=0.$ Plugging $x_{{k_0}+1}=0$ into \eqref{dif_eq} at $k=k_0$ we get $r_{k_1}\Delta x_{k_1}=r_{k_0}\Delta x_{k_0}=0$ with $k_1=k_0+1$, and so $\Delta x_{k_1}=0,$ hence $x_{{k_1}+1}=0.$ Inducting forward shows $x_k =0$ for all $k \geq k_0$. Running the same argument backwards (apply \eqref{dif_eq} at $k-1$) shows $x_k =0$ for all $k \leq k_0$. Thus, $x$ is identically zero, contradicting nontriviality. So, $\varrho_k$ and $\varphi_k$ are well defined for all $k$, and by construction \eqref{PT_x} and \eqref{PT_rx} hold. \newline
\indent From $r_k \Delta x_k =\varrho_k \cos \varphi_k$ we get $\Delta x_k =\dfrac{\varrho_k}{r_k}\cos \varphi_k.$ Thus,
\begin{equation} \label{x_k1}
x_{k+1}=x_k +\Delta x_k=\varrho_k \sin \varphi_k+\dfrac{\varrho_k}{r_k}\cos \varphi_k=\varrho_k \bigg(\sin \varphi_k +\dfrac{1}{r_k}\cos \varphi_k\bigg).
\end{equation}
Using \eqref{dif_eq},
\begin{align} \label{deltax_k1}
r_{k+1} \Delta x_{k+1}&=r_k \Delta x_k -p_k x_{k+1}=\varrho_k \cos \varphi_k -p_k \varrho_k \bigg(\sin \varphi_k +\dfrac{1}{r_k} \cos \varphi_k\bigg)\\ \nonumber
&=\varrho_k \Bigg(\bigg(1-\dfrac{p_k}{r_k}\bigg)\cos \varphi_k-p_k \sin \varphi_k\Bigg). 
\end{align}
By definition, 
\[
x_{k+1}=\varrho_{k+1}\sin \varphi_{k+1}, \quad r_{k+1} \Delta x_{k+1}=\varrho_{k+1} \cos \varphi_{k+1}.
\]
Multiplying the first one by $\sin \varphi_{k+1}$ and the second one by $\cos \varphi_{k+1}$ and adding the resulting equations gives
\begin{equation} \label{rho_k1}
x_{k+1} \sin \varphi_{k+1}+\big(r_{k+1} \Delta x_{k+1}\big)\cos \varphi_{k+1}=\varrho_{k+1}.
\end{equation}
Substituting \eqref{x_k1} and \eqref{deltax_k1} into \eqref{rho_k1} gives
\begin{align*}
\varrho_{k+1}&=\Bigg[\varrho_k \bigg(\sin \varphi_k +\dfrac{1}{r_k} \cos \varphi_k\bigg)\Bigg]\sin \varphi_{k+1}+\Bigg[\varrho_k \Bigg(\bigg(1-\dfrac{p_k}{r_k}\bigg)\cos \varphi_k-p_k \sin \varphi_k\Bigg)\Bigg]\cos \varphi_{k+1}\\
&=\varrho_k \bigg[\sin \varphi_k \sin \varphi_{k+1}+\cos \varphi_k \cos \varphi_{k+1}+\dfrac{1}{r_k} \cos \varphi_k \sin\varphi_{k+1} -\dfrac{p_k}{r_k} \cos \varphi_k \cos \varphi_{k+1} -p_k \sin \varphi_k \cos \varphi_{k+1}\bigg]\\
&= \varrho_k \bigg[ \cos \Delta \varphi_k +\dfrac{1}{r_k} \cos \varphi_k \sin\varphi_{k+1} -p_k \sin \varphi_k \cos \varphi_{k+1}-\dfrac{p_k}{r_k} \cos \varphi_k \cos \varphi_{k+1}\bigg]\\
&= \varrho_k \bigg[1-\dfrac{\big(\Delta \sin \varphi_k\big)^2+\big(\Delta \cos \varphi_k\big)^2}{2}+\dfrac{1}{r_k} \cos \varphi_k \sin \varphi_{k+1}-p_k \sin \varphi_k \cos \varphi_{k+1}-\dfrac{p_k}{r_k}\cos \varphi_k \cos \varphi_{k+1}\bigg]
\end{align*}
thus
\[
\Delta \varrho_k= \varrho_k \bigg[ \dfrac{1}{r_k} \cos \varphi_k \sin \varphi_{k+1} -p_k \sin \varphi_k \cos \varphi_{k+1} -\dfrac{p_k}{r_k} \cos \varphi_k \cos \varphi_{k+1} -\dfrac{\big(\Delta \sin \varphi_k\big)^2+\big(\Delta \cos \varphi_k\big)^2}{2}\bigg]
\] which is exactly \eqref{rho_de}.\newline
\indent Now, to obtain \eqref{phi_de} start from the trigonometric identity
\[
\sin \Delta \varphi_k=\sin \big(\varphi_{k+1}- \varphi_k\big)=\sin \varphi_{k+1}\cos \varphi_k-\cos \varphi_{k+1}\sin \varphi_k.
\]
Replacing each sine and cosine function by the definitions in terms of $x$, $\Delta x$, and $\varrho$:
\[
\sin \varphi_{k+1}=\dfrac{x_{k+1}}{\varrho_{k+1}}, \ \cos\varphi_{k+1}=\dfrac{r_{k+1}\Delta x_{k+1}}{\varrho_{k+1}}, \ \cos\varphi_{k}=\dfrac{r_{k}\Delta x_{k}}{\varrho_{k}}, \ \sin \varphi_{k}=\dfrac{x_{k}}{\varrho_{k}}
\]
gives 
\begin{equation} \label{deltaphik}
\sin \Delta \varphi_k = \dfrac{r_k x_{k+1}\Delta x_k -r_{k+1}x_k \Delta x_{k+1}}{\varrho_k \varrho_{k+1}}.
\end{equation}
The numerator in \eqref{deltaphik} simplifies into $r_k (\Delta x_k)^2+p_k x_k x_{k+1}.$ Plugging this into \eqref{deltaphik} we have 
\begin{equation} \label{sindeltaphik}
\varrho_k \varrho_{k+1}\sin \Delta \varphi_k =r_k (\Delta x_k)^2+p_k x_k x_{k+1}.
\end{equation}
Now, we will write the right-side of \eqref{sindeltaphik} in terms of $\varrho$ and $\varphi$. Since $r_k \Delta x_k=\varrho_k \cos \varphi_k$, $\Delta x_k =\dfrac{\varrho_k}{r_k} \cos \varphi_k$, and $r_k (\Delta x_k)^2 =\dfrac{\varrho^2_k}{r_k}\cos^2 \varphi_k.$ Also, $x_k=\varrho_k \sin \varphi_k$ and $x_{k+1}=\varrho_{k+1} \sin \varphi_{k+1}$, so $p_k x_k x_{k+1}=p_k \varrho_k \varrho_{k+1} \sin \varphi_k \sin \varphi_{k+1}.$ Therefore \eqref{sindeltaphik} becomes 
\[
\varrho_k \varrho_{k+1} \sin \Delta \varphi_k =\dfrac{\varrho_k^2}{r_k} \cos^2 \varphi_k+p_k \varrho_k \varrho_{k+1} \sin \varphi_k \sin \varphi_{k+1}.
\]
Dividing both sides by $\varrho_k \varrho_{k+1} (>0)$ gives 
\begin{equation} \label{divideby}
\sin \Delta \varphi_k =\dfrac{\varrho_k}{r_k \varrho_{k+1}}\cos^2 \varphi_k+p_k \sin \varphi_k \sin \varphi_{k+1}.
\end{equation}
From \eqref{dif_eq} we have $r_k\Delta x_k=r_{k+1} \Delta x_{k+1}+p_k x_{k+1},$ dividing this by $\varrho_{k+1}$ gives 
\[
\dfrac{r_k \Delta x_k}{\varrho_{k+1}}=\dfrac{r_{k+1}\Delta x_{k+1}}{\varrho_{k+1}}+p_k \dfrac{x_{k+1}}{\varrho_{k+1}}.
\]
Remember the definitions
\[
\dfrac{r_k \Delta x_k}{\varrho_{k+1}}=\dfrac{\varrho_k}{\varrho_{k+1}}\cos \varphi_k, \ \dfrac{r_{k+1} \Delta x_{k+1}}{\varrho_{k+1}}=\cos \varphi_{k+1},\ \dfrac{x_{k+1}}{\varrho_{k+1}}=\sin \varphi_{k+1}.
\]
So we get
\begin{equation} \label{almost}
\dfrac{\varrho_k}{\varrho_{k+1}} \cos \varphi_k=\dfrac{r_{k+1} \Delta x_{k+1}}{\varrho_{k+1}}+p_k \dfrac{x_{k+1}}{\varrho_{k+1}}=\cos \varphi_{k+1}+p_k \sin \varphi_{k+1}.
\end{equation}
Multiplying both sides of \eqref{almost} by $\dfrac{\cos \varphi_k}{r_k}$
\[
\dfrac{\varrho_k}{\varrho_{k+1}} \cdot \dfrac{\cos^2 \varphi_k}{r_k}=\dfrac{1}{r_k} \cos \varphi_k \cos \varphi_{k+1} +\dfrac{p_k}{r_k} \cos \varphi_k \sin \varphi_{k+1}.
\]
Insert this into \eqref{divideby}, then 
\[
\sin \Delta \varphi_k=\dfrac{1}{r_k} \cos \varphi_k \cos \varphi_{k+1} +\dfrac{p_k}{r_k} \cos \varphi_k \sin \varphi_{k+1}+p_k \sin \varphi_k \sin \varphi_{k+1}
\]
which is exactly \eqref{phi_de}.
\end{proof}

\smallskip
Unlike the continuous case, where the existence and uniqueness of the Prüfer phase equation rely on Lipschitz conditions and the Picard--Lindel\"{o}f theorem, our discrete system is governed by a well-defined algebraic map. Because $r_k \neq 0$, any nontrivial solution ensures $\varrho_k > 0$ for all $k$, meaning the normalized state $(x_k/\varrho_k, r_k \Delta x_k/\varrho_k)$ uniquely defines a point on the unit circle at each step. Therefore, the amplitude sequence $\{\varrho_k\}$ is uniquely determined, and the phase $\varphi_k$ is unique up to an integer multiple of $2\pi$. By fixing an initial phase $\varphi_0 \in [0, 2\pi)$ and restricting the forward difference to the half-open interval $\Delta \varphi_k \in (-\pi, \pi]$ for all $k$, we obtain a uniquely defined phase sequence $\{\varphi_k\}_{k \in \mathbb{Z}}$ without requiring any additional analytical assumptions.
\section{Oscillation Properties and Generalized Zeros}
To develop a discrete analogue of Sturm's oscillation and separation properties, it is necessary to utilize the concept of a generalized zero originally introduced by Hartman \cite{MR515528}. 
\begin{definition}
    A solution $x=x_k$ $(k \in \mathbb{Z})$of the difference equation~\eqref{dif_eq} is said to have a generalized zero at the integer index $m>k_0$ if either $x_m=0,$ or $x_{m-1}x_m<0$.
\end{definition}
In other words, a \textit{generalized zero} occurs at step $m$ if the solution vanishes exactly at that node or undergoes a strict change of sign between step $m-1$ and step $m$. 

By explicitly assuming $r_k >0$ for all $k$ in the domain of interest, the finite interval comparison theorem \cite[Theorem 1.2]{MR2979834} can be tailored to our system as follows. 
\begin{theorem}
Let $M, N \in \mathbb{Z}$ with $M<N$. Let $\varphi_k$ and $\psi_k$ be nontrivial solutions to the following second-order difference equations, respectively:
\begin{align}
    \Delta(r_k \Delta \varphi_k) + p_{1,k} \varphi_k = 0, \label{comp_sys1}\\
    \Delta(r_k \Delta \psi_k) + p_{2,k} \psi_k = 0, \label{comp_sys2}
\end{align}
where $r_k>0$ for all $k \in [M,N]_{\mathbb{Z}}$. If $\varphi_k$ has generalized zeros at $M$ and at $N$, but contains no generalized zeros in $[M+1,N-1]_{\mathbb{Z}}$, then any solution $\psi_k$ of \eqref{comp_sys2} must have at least one generalized zero in $[M,N]_{\mathbb{Z}}$.
\end{theorem}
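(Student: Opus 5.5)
Note first that the statement is missing a Sturm-type hypothesis relating the two potentials; without one the conclusion is false, for example when $p_{2,k}$ is very negative. Following \cite[Theorem 1.2]{MR2979834}, I will prove it under the standing assumption $p_{2,k}\ge p_{1,k}$ on $[M,N-1]_{\mathbb Z}$, with $r_k>0$.

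The approach is a contradiction argument with a discrete Wronskian. Suppose $\psi$ has no generalized zero in $[M,N]_{\mathbb Z}$. Then $\psi_k\neq0$ for $M\le k\le N$, and $\psi_{k-1}\psi_k>0$ at every index $k\in[M,N]_{\mathbb Z}$, so $\psi$ has one sign on $[M-1,N]_{\mathbb Z}$. Replacing $\psi$ by $-\psi$, we may take $\psi>0$ there. Since $\varphi$ has no generalized zero in $[M+1,N-1]_{\mathbb Z}$, it also has one sign on $[M,N-1]_{\mathbb Z}$, and we take $\varphi_k>0$ for $M\le k\le N-1$. The generalized zeros at the ends then give $\varphi_{M-1}\le 0$ and $\varphi_N\le0$, with $\varphi_N=0$ or $\varphi_N<0$ according to the definition.

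The central tool is the Wronskian-type quantity $W_k=r_k(\varphi_k\Delta\psi_k-\psi_k\Delta\varphi_k)$ (equivalently $r_k(\varphi_k\psi_{k+1}-\varphi_{k+1}\psi_k)$). Expanding $\Delta W_k$ with the product rule and using \eqref{comp_sys1}, \eqref{comp_sys2} to replace $\Delta(r_k\Delta\cdot)$ gives an expression built from $p_{1,k}$, $p_{2,k}$ and the products $\varphi\psi$ at adjacent nodes. The aim is an identity of the form
\[
\Delta W_k = -(p_{2,k}-p_{1,k})\,(\text{positive product of }\varphi,\psi) + (\text{terms controlled by signs}),
\]
so that $\Delta W_k\le 0$ on $[M,N-1]_{\mathbb Z}$ under the sign normalizations. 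Telescoping from $M$ to $N-1$ then gives $W_N\le W_M$, and I expect the strict inequality to come either from $p_2\ne p_1$ somewhere or from a strict boundary sign.

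The endpoint values then produce the contradiction. From $W_M=r_M(\varphi_M\psi_{M+1}-\varphi_{M+1}\psi_M)$ and $W_N$ written analogously, the conditions $\varphi_{M-1}\le0<\varphi_M$, $\varphi_{N-1}>0\ge\varphi_N$, $\psi>0$ and $r>0$ should give $W_M$ and $W_N$ signs opposite to the monotonicity just obtained. It may be cleaner to form the Wronskian at $M-1$ and $N-1$, which makes the boundary terms $r\,\varphi\,\psi$-products of definite sign.

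I expect the main obstacle to be the index mismatch. In \eqref{comp_sys1}, \eqref{comp_sys2} the potential multiplies $\varphi_k$, not $\varphi_{k+1}$ as in \eqref{dif_eq}. So the recurrence $r_{k+1}x_{k+2}-(r_k+r_{k+1})x_{k+1}+(r_k+p_k)x_k=0$ is not in symmetric Jacobi form, and the Wronskian is not exactly conserved when $p_1=p_2$; $\Delta W_k$ picks up cross terms like $p_k(\varphi_k\psi_{k+1}-\psi_k\varphi_{k+1})$. My first attempt is a weighted Wronskian $c_kW_k$, with $c_k$ chosen from the product $\prod(1-p_j/r_j)$-type factors that appear in \eqref{deltax_k1}, to restore a telescoping identity. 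If that fails, I would instead write $\Delta W_k=\alpha_kW_k+\beta_k$ and iterate this linear first-order recurrence, checking that $\alpha_k$ preserves sign. The positivity of $1-p_k/r_k$ that this may require should be checked against the hypotheses in \cite{MR2979834}.
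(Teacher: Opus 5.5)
The paper gives no proof of its own here; it only invokes Theorem~1.2 of \cite{MR2979834}. Your proposal therefore has to stand alone, and it has a genuine gap at exactly the step you postponed.

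You are right that a comparison hypothesis is missing. However, the version you commit to proving, which adds only $p_{2,k}\ge p_{1,k}$, is still false for \eqref{comp_sys1}--\eqref{comp_sys2}. Take $r_k\equiv1$, $p_{1,k}\equiv-4$ and $\varphi_k=(-1)^k$, so that $\Delta^2\varphi_k=4(-1)^k$. This $\varphi$ has a generalized zero at every index, hence at $M$ and at $N=M+1$, with $[M+1,N-1]_{\mathbb Z}=\emptyset$. With $p_{2,k}\equiv0$, the solution $\psi_k\equiv1$ has no generalized zero at all. So no sign-controlled identity of the kind you hope for can exist.

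Carrying out the computation with $W_k=r_k(\varphi_k\psi_{k+1}-\varphi_{k+1}\psi_k)$ gives
\[
W_{k+1}=\Bigl(1+\frac{p_{1,k}}{r_k}\Bigr)W_k-(p_{2,k}-p_{1,k})\,\varphi_{k+1}\psi_k .
\]
The relevant factor is therefore $1+p_{1,k}/r_k$, not $1-p_k/r_k$. The factor in \eqref{deltax_k1} belongs to \eqref{dif_eq}, where the potential multiplies $x_{k+1}$. Positivity of $1+p_{1,k}/r_k$ is a genuinely new hypothesis, and it must hold down to $k=M-1$. For instance, take $r\equiv1$, $p_{1,-1}=-2.9$ and $p_{1,k}=0$ otherwise. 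The solution with $(\varphi_{-1},\varphi_0,\varphi_1,\varphi_2)=(-1,1,0.1,-0.8)$ has generalized zeros at $M=0$ and $N=2$ but not at $1$, while $\psi\equiv1$ still has none.

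There are two ways to close the argument. If you assume $r_k>0$, $r_k+p_{1,k}>0$ and $p_{2,k}\ge p_{1,k}$ for $k\in[M-1,N-2]_{\mathbb Z}$, the recursion propagates $W_k<0$ and your argument goes through. Alternatively, use the self-adjoint form \eqref{dif_eq}, which is presumably what the cited result treats. There the Lagrange identity gives exactly $\Delta W_k=-(p_{2,k}-p_{1,k})\varphi_{k+1}\psi_{k+1}$, and only $p_{2,k}\ge p_{1,k}$ is needed.

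The endpoint bookkeeping also needs repair. If $\varphi_M=0$, nothing forces $\varphi_{M-1}\le0$, so you must split into cases:
\begin{itemize}
\item When $\varphi_{M-1}\varphi_M<0$ (so $\varphi_M>0$), start from $W_{M-1}=r_{M-1}(\varphi_{M-1}\psi_M-\varphi_M\psi_{M-1})<0$. This uses $r_{M-1}>0$, which lies outside the stated range $[M,N]_{\mathbb Z}$.
\item When $\varphi_M=0$, start from $W_M=-r_M\varphi_{M+1}\psi_M<0$. If $N=M+1$, this case forces $\varphi_{M+1}=0$, so $\varphi\equiv0$.
\end{itemize}
The contradiction comes from $W_{N-1}=r_{N-1}(\varphi_{N-1}\psi_N-\varphi_N\psi_{N-1})>0$. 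By contrast, $W_N$ involves $\varphi_{N+1}$ and $\psi_{N+1}$, and $W_M$ has no definite sign when $\varphi_M>0$. So telescoping between $M$ and $N$, as you first proposed, cannot work; moving to $M-1$ and $N-1$ is necessary, not just cleaner. The strict inequality comes from these boundary signs, not from $p_2\ne p_1$.

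Finally, ``no generalized zero at $M$'' gives only $\psi_{M-1}\psi_M\ge0$, that is, $\psi_{M-1}\ge0$ rather than $\psi_{M-1}>0$. This is harmless.
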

\section{The Prüfer-Based Shooting Method and Spectral Localization}
Recently, it was shown that \cite{MR4234167} shooting methods can turn a discrete boundary value problem into a forward recursion from one endpoint to the other, allowing exact conclusions about existence and number of solutions. \cite{MR4234167} uses the shooting method by replacing a discrete boundary value problem with an initial value problem parametrized by a real variable which is iterated forward step-by-step to the rightmost endpoint of the domain where a specific condition is enforced. This transforms the problem of finding solutions into finding the exact number of real roots of a single algebraic target function. 

We extend this to discrete Prufer setting. Our approach utilizes a phase-dependent framework suited for discrete spectral problems. This formulation allows us to fix the initial phase angle according to the left boundary condition and treat the spectral parameter as the shooting variable, directly tracking the monotonic rotation of the phase angle to locate the eigenvalues. 

We aim to estimate the eigenvalues of the Sturm--Liouville difference equation

\begin{equation} \label{lambda_dep_SL}
\Delta\big(r_k \Delta x_k\big)+p_k x_{k+1}=\lambda x_{k+1}
\end{equation}
with the Dirichlet boundary conditions 
\[
x_0=x_{N+1}=0.
\]
As shown before this boundary value problem can be turned into an initial value problem derived from the discrete Prüfer transformation:
\begin{align} \nonumber
\sin \Delta \varphi_k=\dfrac{1}{r_k} \cos \varphi_k \cos \varphi_{k+1}+&p_k \sin \varphi_k \sin\varphi_{k+1}\\ \label{phi_de_lambda}
&+\dfrac{p_k}{r_k} \cos \varphi_k \sin \varphi_{k+1}-\lambda \bigg(\sin \varphi_k \sin \varphi_{k+1}+ or -\dfrac{1}{r_k} \cos \varphi_k \sin \varphi_{k+1}\bigg).
\end{align}

To initialize the shooting algorithm for the regular Sturm--Liouville difference equation subject to Dirichlet boundary conditions, we must first establish the initial phase angle at the leftmost node. Given the boundary condition $x_0=0$ at $k=0$ we apply the discrete Prüfer transformation for $x_k$:
\[
x_0=\varrho_0 \sin \varphi_0.
\]
Since the amplitude sequence must satisfy $\varrho_0$ for all $k$ to avoid the trivial solution, the boundary condition forces $\sin \varphi_0=0$. This restricts the initial phase angle to an integer multiple of $\pi$. To establish a standardized baseline that tracks the rotation of the phase angle continuously as the index $k$ increases, we choose the principal value: $\varphi_0=0$. This choice completely specifies the initial state of the phase sequence, independent of the eigenvalue parameter $\lambda$, providing a fixed point from which the forward iteration can initiate.

With the initial phase angle fixed at $\varphi_0=0$, the algorithm computes the phase sequence step-by-step for a given trial value of the eigenvalue parameter $\lambda$. To update the phase from index $k$ to $k+1$, we consider the parameter-dependent phase relation derived from the discrete Prüfer system~\eqref{phi_de_lambda}.

Expanding the left-hand side using the trigonometric subtraction identity $\sin\big(\varphi_{k+1}-\varphi_k\big)=\sin \varphi_{k+1}\cos \varphi_k-\cos \varphi_{k+1}\sin\varphi_k$ allows us to gather all terms involving $\sin \varphi_{k+1}$ and $\cos \varphi_{k+1}$. Dividing the resulting relation by $\cos \varphi_{k+1}$ yields an explicit recurrence formula for the tangent of the next phase angle:
\[
\tan \varphi_{k+1}=\dfrac{\sin \varphi_k+\dfrac{1}{r_k}\cos \varphi_k}{\cos \varphi_k-\big(p_k-\lambda\big)\sin \varphi_k-\dfrac{p_k-\lambda}{r_k}\cos \varphi_k}.
\]
By evaluating this algebraic expression at each node, the algorithm generates the phase value at the right boundary $\varphi_{k+1}(\lambda)$, after exactly $N+1$ iterations. This forward calculation treats $\lambda$ as the sole independent variable governing the final state of the phase sequence. 

A simple calculation of the inverse tangent function automatically restricts angles to a single principal branch, which creates false jumps in the discrete sequence. To track the true total rotation of the phase across the grid, the algorithm must enforce the forward step constraint $\Delta \varphi_k \in (-\pi, \pi]$. To implement this constraint, the algorithm monitors the sign of the denominator in the tangent recurrence formula at each step. Whenever the solution changes sign or hits an exact zero, the tracking routine adds an appropriate multiple of $\pi$ to the total phase angle. This step guarantees that the phase sequence grows uniquely and monotonically as it moves along the grid.

Once the iteration loop completes exactly $N+1$ steps, the algorithm yields the final phase angle at the right-hand boundary, denoted as $\varphi_{N+1} (\lambda)$. To satisfy the right-hand Dirichlet boundary condition the final state of the solution must land exactly on the horizontal axis. In the Prüfer coordinate system, this condition means the terminal phase angle must be an integer multiple of $\pi$. 

By the Discrete Sturm Comparison Theorem, the phase angle at the boundary grows strictly monotonically with respect to the spectral parameter $\lambda$ Furthermore, to find the specific $n$-th eigenvalue $\lambda_n$ the solution must accumulate exactly $n$ generalized zeros inside the domain. This constraint allows us to define the target function, $F(\lambda)$, as
\begin{equation} \label{tar_func}
F(\lambda) := \varphi_{N+1}(\lambda) - (n+1)\pi = 0.
\end{equation}
Any value of $\lambda$ where $F(\lambda)=0$  corresponds to a true eigenvalue of the boundary value problem. The task of finding the eigenvalues is therefore reduced to locating the unique roots of this single-variable function.

Because $F(\lambda)$ is strictly monotonic, it is guaranteed to change sign across an initial interval $[\lambda_{\text{min}}, \lambda_{\text{max}}]$ containing the true eigenvalue $\lambda_n$. The algorithm applies the Bisection Method to this starting interval. At each step, the routine evaluates $F(\lambda)$ at the midpoint by running the forward phase mapping from $k=0$ to $k=N+1$. Based on the sign of the resulting boundary value, the search interval is halved. The loop terminates when the error falls below a specified numerical tolerance isolating the eigenvalue $\lambda_n$.

To establish a reliable search interval  $[\lambda_{\text{min}}, \lambda_{\text{max}}]$ for the root-finding algorithm, we employ matrix localization techniques. By algebraic expansion, the parameter-dependent discrete Sturm--Liouville equation~\eqref{lambda_dep_SL} can be written as a second-order recurrence relation. Expanding the forward difference operator~\eqref{for_dif} yields
\[
r_{k+1}\big(x_{k+2}-x_{k+1}\big)-r_k\big(x_{k+1}-x_k\big)+p_k x_{k+1}=\lambda x_{k+1},
\]
which simplifies to 
\[
r_k x_k+\big(p_k-r_k-r_{k+1}\big)x_{k+1}+r_{k+1}x_{k+2}=\lambda x_{k+1}.
\]
Imposing the Dirichlet boundary conditions, this recurrence relation defines a symmetric tridiagonal matrix eigenvalue problem $Ax=\lambda x$ for the  vector $x=[x_1, x_2, \cdots, x_N]^T$. The tridiagonal operator $A$ is given by
\[
A=
\begin{bmatrix}
p_1-r_1-r_2 & r_2 & 0 & \cdots & 0 \\
r_2 & p_2-r_2-r_3 & r_3 & \cdots & 0 \\
0 & r_3 & p_3-r_3-r_4 & \cdots & 0 \\
\vdots & \vdots & \vdots & \ddots & \vdots \\
0 & 0 & 0 & \cdots & p_N-r_N-r_{N+1}
\end{bmatrix}.
\]
Because $A$ is real and symmetric, its eigenvalues are strictly real. We can estimate bounds for the spectrum by applying Gershgorin's Circle Theorem (see [golub2013matrix]). For each row $k$, the center of the Gershgorin interval is determined by the diagonal entry $a_{kk}=p_k-r_k-r_{k+1}$, while the radius $R_k$ is the sum of the absolute values of the off-diagonal entries. For a general interior row $k$ where $1<k<N$, the assumption $r_k>0$ implies 
\[
R_k=r_k+r_{k+1}.
\]
The eigenvalues corresponding to these interior rows are bounded within the intervals
\[
\lambda \in [a_{kk}-R_k,\, a_{kk}+R_k] = [p_k-2r_k-2r_{k+1},\, p_k].
\]
For the boundary rows $k=1$ and $k=N$, the radii drop to $R_1=r_2$ and $R_N=r_N$, respectively, due to the boundary constraints. Taking the absolute minimum and maximum bounds across all rows guarantees a safe initial bracketing interval $[\lambda_{\min}, \lambda_{\max}]$ for the bisection routine, defined by

\[
\lambda_{\min}=\min_{1\le k\le N}\{p_k-2r_k-2r_{k+1}\},
\]

\[
\lambda_{\max}=\max_{1\le k\le N}\{p_k\}.
\]
\section{Numerical Experiments and Algorithmic Verification}
\begin{example}
    To demonstrate the practical efficacy and accuracy of the discrete Prüfer shooting algorithm, we apply the operational steps formalized in the preceding sections to a classical benchmark problem. We consider the discrete analogue of the regular continuous Sturm--Liouville equation
\[
y''+\lambda y=0,\qquad y(0)=y(\pi)=0,
\]
which possesses the well-known continuous eigenvalues
\[
\lambda_n=n^2,\qquad n=1,2,3,\ldots
\]
We partition the domain $[0,\pi]$ into $N+1$ equal subintervals of length
\[
h=\frac{\pi}{N+1},
\]
defining the discrete nodes
\[
t_k=kh,\qquad k=0,1,\ldots,N+1.
\]
In accordance with our general algebraic framework, setting the coefficient sequences to $r_k=1$ and $p_k=0$ for all $k$ yields the regular discrete system
\[
\Delta^2 x_k=\lambda_{\mathrm{alg}} x_{k+1},\qquad x_0=0,\qquad x_{N+1}=0,
\]
where $\lambda_{\mathrm{alg}}$ represents the algebraic eigenvalue of the corresponding tridiagonal matrix operator.

We initiate the forward shooting loop by applying the discrete Prüfer transformation:
\[
x_k=\rho_k \sin\varphi_k,\qquad \Delta x_k=\rho_k \cos\varphi_k.
\]

Evaluating the transformation at the left boundary $(k=0)$ under the Dirichlet constraint $x_0=0$ yields $\rho_0\sin\varphi_0=0$. To preserve the non-triviality condition $\rho_k>0$, the initial phase state is uniquely fixed to the principal value:
\[
\varphi_0=0.
\]
For a given trial parameter $\lambda_{\mathrm{alg}}$ selected from the localized Gershgorin interval, the algorithm propagates the phase sequence forward. Substituting the constant coefficients into the general phase mapping relation simplifies the recurrence formula to:
\[
\tan\varphi_{k+1}
=
\frac{\sin\varphi_k+\cos\varphi_k}
{\cos\varphi_k+\lambda_{\mathrm{alg}}\sin\varphi_k+\lambda_{\mathrm{alg}}\cos\varphi_k}.
\]

The algorithm computes this update for each successive node $k=0,1,\ldots,N$. To preserve the cumulative tracking of the phase and enforce the forward step constraint $\Delta\varphi_k\in(-\pi,\pi]$, the routine monitors the sign of the denominator. Whenever the denominator sequence undergoes a sign change or passes through an exact zero, the tracking routine adds an appropriate integer multiple of $\pi$ to the unrolled phase angle. This explicit adjustment ensures that the boundary phase function $\varphi_{N+1}(\lambda_{\mathrm{alg}})$ grows monotonically with respect to the spectral parameter.
Upon completing exactly $N+1$ updates, the terminal phase value $\varphi_{N+1}(\lambda_{\mathrm{alg}})$ is evaluated at the right boundary. To satisfy the terminal Dirichlet boundary condition $x_{N+1}=0$, the target function is formulated for a specific mode $n$:
\[
F(\lambda_{\mathrm{alg}}):=\varphi_{N+1}(\lambda_{\mathrm{alg}})-(n+1)\pi=0.
\]

Using the Bisection Method, the algorithm continually halves the search bracket based on the sign of $F(\lambda_{\mathrm{alg}})$ until the boundary residual falls below a specified numerical tolerance, $\lvert F(\lambda_{\mathrm{alg}})\rvert<\epsilon$, isolating the unique root $\lambda_{\mathrm{alg}}$.

To reconcile this discrete root with the continuous spectrum, we map the computed algebraic eigenvalue back to its continuous-equivalent counterpart, $\mu_n$, via the second-order spatial scaling transformation:
\[
\mu_n=-\frac{\lambda_{\mathrm{alg}}}{h^2}.
\]
To illustrate the convergence behavior, the algorithm was executed on a grid size of $N=100$, yielding $h\approx 0.0311$. For the fundamental mode $(n=1)$, the bisection routine successfully isolates the algebraic root at $\lambda_{\mathrm{alg}}\approx -0.0009667$. Applying the spatial scaling yields the continuous-equivalent eigenvalue:
\[
\mu_1=-\frac{-0.0009667}{\left(\frac{\pi}{101}\right)^2}\approx 0.99975.
\]

This matches the analytical discrete eigenvalue derived via the exact discrete spectrum formula, $\mu_1=\frac{4}{h^2}\sin^2\!\left(\frac{h}{2}\right)\approx 0.99975$. Comparing this computed result directly to the classical continuous eigenvalue $\lambda_1=1^2=1$ demonstrates an absolute discretization error of $2.5\times 10^{-4}$. Increasing the grid resolution to $N=1000$ reduces the absolute error to $\mathcal{O}(10^{-6})$, matching the expected theoretical second-order convergence rate, $\mathcal{O}(h^2)$. This systematic agreement confirms that the discrete Prüfer shooting framework functions reliably and provides a robust, mathematically sound method for approximating the continuous spectrum.
\end{example}
\section{Conclusion}
In this paper, we used the discrete Prüfer transformation to analyze regular second-order Sturm--Liouville difference equations. By mapping the system to an algebraic phase equation, we developed a numerical shooting algorithm. Using Gershgorin's theorem, we established reliable initial search intervals that ensure the bisection method isolates the eigenvalues without missing roots. Our numerical tests on a classical benchmark problem confirmed that the method preserves the qualitative behavior of the continuous spectrum and achieves second-order \(\mathcal{O}(h^2)\) convergence.

The monotonic properties of the discrete phase function also provide a useful starting point for inverse spectral problems, where the goal is to reconstruct the potential sequence \(p_k\) from eigenvalue or nodal data.  future work will focus on applying this discrete Prüfer approach to extend the method directly to these inverse problems.


\end{document}